\newcommand{\coloneqq}{\mathrel{\mathop:}=}
\def\R{\mathbb{R}}
\def\N{\mathbb{N}}
\def\Q{\mathbb{Q}}
\def\Z{\mathbb{Z}}
\def\fa#1{\forall_{#1}\;\;\;}
\def\exi#1{\exists_{#1}\;\;\;}
\DeclareMathOperator{\FG}{FG}
\DeclareMathOperator{\EGR}{Exp}
\DeclareMathOperator{\eGR}{e}
\def\egrs#1#2{\eGR(#1,#2)}
\def\egr#1{\eGR(#1)}
\DeclareMathOperator{\ord}{ord}
\def\egrord#1{\ord_{\EGR}(#1)}
\DeclareMathOperator{\Aut}{Aut}
\def\gr#1#2#3{\beta_{#3}(#1,#2)}
\def\Gr#1#2#3{B_{#3}(#1,#2)}
\def\genrel#1#2{\langle #1 \mid #2 \rangle}
\DeclareMathOperator{\rk}{rk}
\def\sv#1{\| #1 \|}
\DeclareMathOperator{\SV}{SV}
\def\SVG#1#2{\SV_{#1}(#2)}
\DeclareMathOperator{\vol}{vol}
\def\actson{\curvearrowright}
\renewcommand{\bibnamedash}{\leavevmode\raise3pt\hbox to3em{\hrulefill}\space}
\date{Avril 2023}
\title{Exponential growth rates in hyperbolic groups}
\author{Clara Löh}
\address{Fakult\"at f\"ur Mathematik\\
  Universit\"at Regensburg\\
  93040 Regensburg\\
  Germany}
\email{clara.loeh@ur.de}
\begin{document}

\maketitle

A classical result of J\o rgensen and Thurston shows that the set of
volumes of finite volume complete hyperbolic $3$-manifolds is a
well-ordered subset of the real numbers of order
type~$\omega^\omega$; moreover, each volume can
only be attained by finitely many isometry types of hyperbolic
$3$-manifolds.

\textcite{fs} established a group-theoretic companion of this result:
If $\Gamma$ is a non-elementary hyperbolic group, then the set of
exponential growth rates of~$\Gamma$ is well-ordered, the order type
is at least~$\omega^\omega$, and each growth rate can only be attained
by finitely many finite generating sets (up to automorphisms).

In this talk, we outline this work of Fujiwara and Sela and discuss
related results.

\tableofcontents

\section{Main results}\label{sec:results}

Geometric group theory provides a rich interaction between
the Riemannian geometry of manifolds and the large-scale geometry
of finitely generated groups. This bond is particularly strong
in the presence of negative curvature and explains a
variety of rigidity phenomena.  
The group-theoretic analogues of closed hyperbolic manifolds are
hyperbolic groups; more generally, the group-theoretic analogues of
finite volume complete hyperbolic manifolds are relatively hyperbolic
groups. 

The volume growth behaviour of Riemannian balls in the universal
covering of a compact Riemannian manifold is the same as the growth
behaviour of balls in Cayley graphs of the fundamental group. By
definition, the exponential growth rates of finitely generated groups
measure the exponential expansion rate of balls in Cayley graphs and
thus are entropy-like invariants. While there is no direct connection
between the volume of a hyperbolic manifold~$M$ and the exponential
growth rates of~$\pi_1(M)$, the results of \textcite{fs} show that
certain sets of such values share fundamental structural similarities.

To state these results, for a finitely generated group~$\Gamma$, we
write~$\EGR(\Gamma) \subset \R$ for the (countable) set of all
exponential growth rates~$\egrs \Gamma S$ with respect to finite
generating sets~$S$ of~$\Gamma$.  The automorphism
group~$\Aut(\Gamma)$ acts on the set~$\FG(\Gamma)$ of all finite
generating sets of~$\Gamma$ and $\egrs \Gamma {f(S)} = \egrs \Gamma S$
holds for all~$S \in \FG(\Gamma)$ and all~$f \in \Aut(\Gamma)$.  More
details on terminology and notation can be found in
Appendix~\ref{appx:terminology}.
\enlargethispage{\baselineskip} 

\begin{theo}[well-orderedness; \protect{\cite[Theorem~2.2]{fs}}]\label{thm:wellordered}
  If $\Gamma$ is a hyperbolic group, then $\EGR(\Gamma)$
  is well-ordered (with respect to the standard order on~$\R$).
\end{theo}

\begin{theo}[finite ambiguity; \protect{\cite[Theorem~3.1]{fs}}]\label{thm:fin}
  The set~$
  \{ S \in \FG (\Gamma) \mid \egrs \Gamma S = r \}
  / \Aut(\Gamma)
  $
  is finite for every non-elementary hyperbolic group~$\Gamma$
  and every~$r \in \R$.
\end{theo}

\begin{theo}[growth ordinals; \protect{\cite[Proposition~4.3]{fs}}]\label{thm:ord}
  Let $\Gamma$ be a non-elementary hyperbolic group.  Then the ordinal
  number~$\egrord \Gamma$ associated with~$\EGR(\Gamma)$
  satisfies~$\egrord \Gamma \geq \omega^\omega$.
\end{theo}

Moreover, \textcite[Proposition~4.3]{fs} show that $\egrord \Gamma
=\omega^\omega$ if epi-limit groups over~$\Gamma$ have a Krull
dimension.  In analogy with the case of hyperbolic $3$-manifolds,
they conjecture that $\egrord \Gamma =
\omega^\omega$ holds for all non-elementary hyperbolic
groups~$\Gamma$ \parencite[Section~4]{fs} .

\begin{exem}
  If $F$ is a finitely generated free group of rank at least~$2$, then
  limit groups over~$F$ have  a Krull dimension \parencite{louder}.
  Hence, Theorems~\ref{thm:wellordered}--\ref{thm:ord} show that 
  $\egrord F = \omega^\omega$ and each value in~$\EGR(F)$ is realised
  by only finitely many generating sets (up to automorphisms of~$F$).
\end{exem}

The key idea for the proofs of
Theorems~\ref{thm:wellordered}--\ref{thm:ord} is inspired by the
proofs by Thurston and J\o rgensen for the set of volumes of
hyperbolic $3$-manifolds and model theory: One passes from sequences
of generating sets (of bounded size) of the given hyperbolic
group~$\Gamma$ to a limit group over~$\Gamma$ with an associated
finite generating set; i.e., limit groups play the role of cusped
manifolds.  The main challenge is then to compute the exponential
growth rate of this limiting object in terms of the exponential growth
rates appearing in the original sequence.

\subsection*{Overview}

Basics on hyperbolic groups, exponential growth rates, and
well-ordered countable sets are recalled in
Appendix~\ref{appx:terminology}.  We briefly explain the manifold
context of the results above in Section~\ref{sec:context}, with a
focus on hyperbolic and simplicial volume.  Section~\ref{sec:proof}
gives a proof outline of the main results.  Finally, in
Section~\ref{sec:apps}, we mention applications and extensions of the
main results.

\section{Context: Volumes of manifolds and hyperbolicity}\label{sec:context}

The results of \textcite{fs} are analogues of the behaviour of volumes
of finite volume complete hyperbolic $3$-manifolds.  We recall this
background in Section~\ref{subsec:hyp}. The situation for simplicial
volume is discussed in Section~\ref{subsec:sv}. In
addition, we mention right-computability as a further
structural property of ``volume'' sets (Section~\ref{subsec:rc}).

\subsection{Hyperbolic volume}\label{subsec:hyp}

The structure and volumes of hyperbolic $3$-manifolds was
analysed in the breakthrough work of J\o rgensen and Thurston. 

\begin{theo}[volumes of hyperbolic $3$-manifolds; \protect{\cite[Chapter~6]{thurston}}]
  The set
  \[ \{ \vol(M)
  \mid \text{$M$ is a finite volume complete hyperbolic $3$-manifold}
  \} 
  \]
  is well-ordered (with respect to the standard order on~$\R$)
  and the associated ordinal is~$\omega^\omega$.
  Moreover, 
  every value arises only from finitely many isometry
  classes of finite volume hyperbolic $3$-manifolds.
\end{theo}

We briefly summarise the main steps of the proof
\parencite{gromov_bourbaki}; the key is to study the convergence of
sequences of hyperbolic manifolds and to understand the role of
hyperbolic manifolds with cusps as limits of such sequences:
\begin{enumerate}
\item Every sequence~$(M_n)_{n \in \N}$ of complete hyperbolic $3$-manifolds
  with uniformly bounded volume contains a subsequence that
  converges in a strong geometric sense to a finite volume complete
  hyperbolic $3$-manifold~$M$ and $\lim_{n \to\infty} \vol(M_n) = \vol(M)$.
  Furthermore, for ``non-trivial'' such sequences, one can show that
  $\vol(M) > \vol(M_n)$ holds for all members~$M_n$ of the subsequence.

  This can be used to show that the set of hyperbolic volumes is well-ordered  
  and that every value can only be obtained in finitely many ways.
\item Every finite volume complete hyperbolic $3$-manifold with $k \in
  \N$ cusps can be obtained for each~$p \in \{0,\dots,k\}$ as the
  limit of a sequence of finite volume complete hyperbolic
  $3$-manifolds with exactly $p$ cusps.

  This can be used to show that the volume ordinal is at least~$\omega^k$.
  Constructing hyperbolic $3$-manifolds with arbitrarily large numbers
  of cusps thus shows that the volume ordinal is at least~$\omega^\omega$.
  In combination with the first part, one can derive that the volume ordinal
  equals~$\omega^\omega$.
\end{enumerate}

In contrast, in higher dimensions, the set of volumes of
finite volume complete hyperbolic manifolds leads to the
ordinal~$\omega$. This follows from Wang's finiteness theorem
and the unboundedness of hyperbolic volumes.

\begin{theo}[Wang's finiteness theorem; \cite{wang}]
  Let $n \in \N_{\geq 4}$ and $v \in \R_{\geq 0}$. Then
  there exist only finitely many isometry classes of
  finite volume complete hyperbolic $n$-manifolds~$M$
  with~$\vol (M) \leq v$.
\end{theo}

\subsection{Simplicial volume}\label{subsec:sv}

Simplicial volume is a homotopy invariant of closed
manifolds. For several geometrically relevant classes
of Riemannian manifolds, the simplicial volume encodes
topological rigidity properties of the Riemannian volume.

\begin{defi}[simplicial volume; \cite{gromov_vbc}]
  The \emph{simplicial volume} of an oriented closed connected
  manifold~$M$ is the $\ell^1$-semi-norm of its (singular)
  $\R$-fundamental
  class:
  \[ \sv M
  \coloneqq  \| [M]_\R \|_1
  \coloneqq  
  \inf
  \bigl\{ \sum_{j=1}^k |a_j|
  \bigm| \sum_{j=1}^k a_j \cdot \sigma_j
         \text{ is a singular $\R$-fundamental cycle of~$M$}
  \bigr\}
  \]
\end{defi}

For genuine hyperbolic manifolds, the simplicial volume
leads to the same ordering and finiteness behaviour
as the hyperbolic volume (Section~\ref{subsec:hyp}):

\begin{exem}[hyperbolic manifolds]
  If $M$ is an oriented closed connected hyperbolic manifold
  of dimension~$n$, then
  \[ \sv M = \frac{\vol(M)}{v_n},
  \]
  where $v_n \in \R_{>0}$ is the hyperbolic volume of ideal
  regular geodesic $n$-simplices in hyperbolic $n$-space
  \parencite{thurston,bp}. A similar relationship
  also holds in the complete finite volume case
  (\cite{thurston}; \cite[Appendix~A]{fujiwaramanning}).
  In particular, this proportionality can be used to prove
  mapping degree estimates in terms of the hyperbolic volume
  for continuous maps between hyperbolic manifolds
  \parencite{gromov_vbc}.
\end{exem}

Passing to the setting of fixed hyperbolic fundamental groups, we
obtain:

\begin{exem}[hyperbolic fundamental group]
  Let $\Gamma$ be a finitely presented group and let $n \in \N$. Then
  the set
  \[ \SVG \Gamma n
  \coloneqq  \{ \sv M
  \mid \text{$M$ is an oriented closed connected $n$-manifold
             with~$\pi_1(M) \cong \Gamma$}
  \}
  \]
  is a subset of~$\{ \| \alpha\|_1 \mid \alpha \in H_n(\Gamma;\R)
  \text{ is integral}\}$, where a class in~$H_n(\Gamma;\R)$ is
  \emph{integral} if it lies in the image of the change of
  coefficients map~$H_n(\Gamma;\Z) \to H_n(\Gamma;\R)$
  \parencite[Section~3.1]{loeh_simvolpi1}.

  If $\Gamma$ is hyperbolic and $n \geq 2$, then $\|\cdot\|_1$
  is a norm on~$H_n(\Gamma;\R)$ (by the results of \textcite{mineyev}
  on bounded cohomology and the duality principle).
  In particular: The set~$\SVG \Gamma n \subset \R$ is well-ordered
  and for~$n\geq 4$ the ordinal associated with~$\SVG \Gamma n$ is
  \begin{itemize}
  \item
    either~$0$ (if~$H_n(\Gamma;\R) \cong 0$);
  \item
    or~$\omega$ (if $H_n(\Gamma;\R) \not\cong 0$): 
    In this case, normed Thom realisation shows that
    indeed infinitely many different values are realised
    \parencite[Section~3.1]{loeh_simvolpi1}.
  \end{itemize}

  For~$n \geq 4$, finite ambiguity breaks down in this
  general topological setting:
  If $M$ is an oriented closed connected $n$-manifold, then for
  each~$k \in \N$, the manifold~$M$ and the iterated connected
  sums~$M_k \coloneqq  M \mathbin{\#} (S^2 \times S^{n-2})^{\# k}$ have the
  same simplicial volume \parencite{gromov_vbc} and isomorphic
  fundamental groups. However, the manifolds~$M_0,M_1,\dots$ all
  have different homotopy types (as can be seen from the homology in
  degree~$2$).
\end{exem}

\subsection{Right-computability}\label{subsec:rc}

In the previous discussion, we focussed on the order
structure of volumes and exponential growth rates. 
Many real-valued invariants in geometric group theory
and geometric topology also carry another, complementary,
structure: They tend to have an intrinsic
limit on their computational complexity. In particular,
such a limit gives additional constraints on the possible
sets of values.

\begin{defi}[right-computable]\label{def:rc}
  A real number~$\alpha$ is \emph{right-computable}
  if the set~$\{x \in \Q \mid x > \alpha \}$ is recursively
  enumerable.
\end{defi}

For example, simplicial volumes of oriented closed connected
manifolds are right-computable real numbers
\parencite{heuerloeh_trans}. On the group-theoretic side,
right-computability naturally arises for stable commutator length
of recursively presented groups \parencite{heuer_sclrc} or $L^2$-Betti
numbers of groups with controlled word problem
\parencite{loehuschold}. Concerning exponential growth rates, we
have the follwing:

\begin{prop}[right-computability of exponential growth rates]
  \label{prop:egr_algorc}
  There exists a Turing machine that
  \begin{itemize}
  \item given a finite presentation~$\genrel SR$ and a
    finite set~$S'$ of words over~$S \sqcup S^{-1}$,
  \item does 
    \begin{itemize}
    \item \emph{not} terminate if $S'$ does not represent
      a generating set of the group~$\Gamma$ described by~$\genrel SR$;
    \item terminate and return an enumeration
      of~$\{ x \in \Q \mid x > \egrs \Gamma {S'} \}$
      if $S'$ represents a generating set of~$\Gamma$.
    \end{itemize}
  \end{itemize}
\end{prop}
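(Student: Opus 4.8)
The plan is to combine two standard facts. The first is that, for a finite presentation $\Gamma = \genrel SR$, the set of words over $S \sqcup S^{-1}$ representing the identity of~$\Gamma$ is recursively enumerable: it is the normal closure of~$R$ in the free group~$F(S)$, so its elements can be listed by enumerating all products of conjugates of relators (equivalently, all finite derivations from~$R$). The second is that, writing $\gr \Gamma {S'} n \coloneqq |\Gr \Gamma {S'} n|$ for the cardinality of the ball of radius~$n$,
\[ \egrs \Gamma {S'} \;=\; \lim_{n \to \infty} \gr \Gamma {S'} n^{1/n}
 \;=\; \inf_{n \geq 1} \gr \Gamma {S'} n^{1/n},
\]
where the second equality is Fekete's subadditivity lemma applied to the submultiplicative sequence $\bigl(\gr \Gamma {S'} n\bigr)_n$ (submultiplicativity being immediate from $\Gr \Gamma {S'} {m+n} \subseteq \Gr \Gamma {S'} m \cdot \Gr \Gamma {S'} n$). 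Passing from a limit to an infimum is the step that will let the condition ``$x > \egrs \Gamma {S'}$'' be expressed by a single existential quantifier.

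The Turing machine would run in two phases. In \emph{Phase~1} it semi-decides whether $S'$ represents a generating set of~$\Gamma$: for each $s \in S$ it searches---dovetailing over words~$v$ in the letters~$S' \sqcup (S')^{-1}$ and over finite derivations from~$R$---for a~$v$ such that $v s^{-1}$ (after spelling out the letters of~$v$ as words over $S \sqcup S^{-1}$) is certified to be trivial in~$\Gamma$ by a derivation from~$R$. Phase~1 halts exactly when such a witness has been found for every $s \in S$, which happens if and only if $S'$ represents a generating set; otherwise Phase~1 runs forever and the machine never emits anything, which is the required non-termination. \emph{Phase~2} is entered only after Phase~1 halts. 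In it the machine dovetails over triples $(n,x,t)$ with $n \geq 1$, $x \in \Q$ with $x > 1$, and $t \geq 0$: at the stage indexed by~$(n,t)$ it spells out the finitely many words of length~$\leq n$ over~$S' \sqcup (S')^{-1}$ as words over~$S \sqcup S^{-1}$ and computes~$b_n^{(t)}$, the number of their classes under the equivalence relation generated by all identities $u =_\Gamma w$ among them that are certified by a derivation from~$R$ of length~$\leq t$; it then outputs~$x$ whenever $x^n > b_n^{(t)}$.

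For correctness: $b_n^{(t)} \geq \gr \Gamma {S'} n$ for every~$t$, and $b_n^{(t)} \to \gr \Gamma {S'} n$ as $t \to \infty$, since only finitely many pairs of words of length~$\leq n$ are involved and every identity that genuinely holds eventually acquires a derivation. As ball cardinalities are at least~$1$, we have $\egrs \Gamma {S'} \geq 1$, so $\{x \in \Q \mid x > \egrs \Gamma {S'}\}$ consists entirely of rationals $x > 1$, and for such~$x$
\[ x > \egrs \Gamma {S'}
 \iff \exists\, n \geq 1 \colon x^n > \gr \Gamma {S'} n
 \iff \exists\, n \geq 1,\ t \geq 0 \colon x^n > b_n^{(t)},
\]
where the first equivalence is the infimum formula and, in the second, ``$\Leftarrow$'' uses $b_n^{(t)} \geq \gr \Gamma {S'} n$ while ``$\Rightarrow$'' uses that the integer sequence $\bigl(b_n^{(t)}\bigr)_t$ is eventually equal to~$\gr \Gamma {S'} n$. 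Hence the output produced in Phase~2 is precisely an enumeration of~$\{x \in \Q \mid x > \egrs \Gamma {S'}\}$ (possibly with repetitions, which is harmless, or one deduplicates), as required.

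What makes this delicate rather than routine is that the word problem of a finitely presented group is in general only semi-decidable, so that neither $\gr \Gamma {S'} n$ nor $\egrs \Gamma {S'}$ is computable and one \emph{cannot} decide ``$x > \egrs \Gamma {S'}$''. Right-computability, however, asks only for an enumeration of the rationals strictly above~$\egrs \Gamma {S'}$, and for that it suffices to approximate each~$\gr \Gamma {S'} n$ from above by an effective non-increasing sequence---which recursive enumerability of the identity set supplies---together with the fact that $\egrs \Gamma {S'}$ is the \emph{infimum}, not merely the limit, of the~$\gr \Gamma {S'} n^{1/n}$. I expect the only real subtlety to be this interplay, and in particular the need to dispose of the non-generating case by a preliminary semi-decision rather than inside the enumeration itself.
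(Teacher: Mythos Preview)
Your proposal is correct and follows essentially the same approach as the paper: both combine recursive enumerability of the normal closure of~$R$ (to approximate ball sizes from above) with the Fekete infimum formula $\egrs \Gamma {S'} = \inf_{n\geq 1} \gr \Gamma {S'} n^{1/n}$ to turn the condition $x > \egrs \Gamma {S'}$ into an existential over~$n$ and a derivation-length bound. Your Phase~1/Phase~2 split and the explicit upper approximants~$b_n^{(t)}$ spell out in more detail what the paper compresses into the sentence that $\{(n,m) \mid m \geq \gr \Gamma {S'} n\}$ is recursively enumerable, but the substance is the same.
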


\begin{coro}\label{cor:egr_basicrc}
  Let $\Gamma$ be a finitely presented group.
  \begin{enumerate}
  \item For every~$S \in \FG(\Gamma)$, the real number~$\egrs \Gamma S$
    is right-computable. 
  \item For every~$r \in \Q$, the truncated
    set~$\{ S \in \FG(\Gamma) \mid \egrs \Gamma S < r \}$
    is recursively enumerable. 
  \end{enumerate}
\end{coro}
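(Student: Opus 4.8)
\emph{Plan.} I would build the required Turing machine in two phases, relying on two standard facts: the word problem of a finitely presented group is semi-decidable, and the growth function of a finitely generated group is submultiplicative. Phase~1 verifies that~$S'$ represents a generating set of~$\Gamma$, and halts precisely when it does; Phase~2, entered only after Phase~1 has halted, enumerates the target set by approximating the ball sizes~$\gr\Gamma{S'}n$ from above. The guiding observation is that, although none of the properties ``$u=v$ in~$\Gamma$'', ``$S'$ generates~$\Gamma$'', and ``$\gr\Gamma{S'}n = k$'' is decidable for arbitrary input, all three are semi-decidable, and one-sided information is exactly what a recursive enumeration needs.

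\emph{Phase~1.} First recall that the set of pairs of words~$(u,v)$ over~$S \sqcup S^{-1}$ with~$u = v$ in~$\Gamma$ is recursively enumerable: one enumerates the normal closure of~$R$ as the finite products of conjugates~$g\,r^{\pm 1}\,g^{-1}$ (with~$g$ a word over~$S \sqcup S^{-1}$ and~$r \in R$) and compares each~$uv^{-1}$ against this list. Since every word over~$S' \sqcup (S')^{-1}$ rewrites as a word over~$S \sqcup S^{-1}$, it follows that ``$S'$ generates~$\Gamma$'' is semi-decidable: it holds if and only if for each~$s \in S$ there is a word~$w_s$ over~$S' \sqcup (S')^{-1}$ with~$w_s = s$ in~$\Gamma$, and dovetailing the above enumeration over the finitely many~$s \in S$ and over all candidate words~$w_s$ uncovers such witnesses exactly when they exist. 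Phase~1 runs this dovetailed search; it halts if and only if~$S'$ represents a generating set of~$\Gamma$, and runs forever otherwise.

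\emph{Phase~2.} Assume Phase~1 has halted. For~$n \in \N$ let~$W_n$ be the finite set of words of length at most~$n$ over~$S' \sqcup (S')^{-1}$, and for~$t \in \N$ let~$\sim_{n,t}$ be the equivalence relation on~$W_n$ generated by those pairs~$u, v \in W_n$ whose equality in~$\Gamma$ is confirmed within the first~$t$ steps of an enumeration of the word problem of~$\Gamma$; set~$b_{n,t} \coloneqq |W_n/\sim_{n,t}|$. For each fixed~$n$ the sequence~$(b_{n,t})_t$ is non-increasing and takes values in the positive integers, hence is eventually constant, and its limit is~$|\Gr\Gamma{S'}n| = \gr\Gamma{S'}n$, since as~$t \to \infty$ the confirmed pairs exhaust all pairs from~$W_n$ that are equal in~$\Gamma$ and equality in~$\Gamma$ is already transitive. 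In particular~$b_{n,t} \geq \gr\Gamma{S'}n$, hence~$b_{n,t}^{1/n} \geq \gr\Gamma{S'}n^{1/n} \geq \egrs\Gamma{S'}$, where the last inequality uses~$\egrs\Gamma{S'} = \inf_{n \geq 1}\gr\Gamma{S'}n^{1/n}$ (submultiplicativity of~$n \mapsto \gr\Gamma{S'}n$ together with Fekete's lemma; cf.\ Appendix~\ref{appx:terminology}). Conversely, if~$x \in \Q$ satisfies~$x > \egrs\Gamma{S'}$, pick~$n$ with~$\gr\Gamma{S'}n^{1/n} < x$; then~$b_{n,t}^{1/n} < x$ once~$b_{n,t}$ has reached its limit. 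Thus, fixing a computable enumeration~$q_0, q_1, \dots$ of~$\Q$ and dovetailing over all triples~$(n,t,i)$, Phase~2 outputs~$q_i$ whenever~$q_i > 0$ and~$q_i^n > b_{n,t}$ (a decidable test, equivalent to~$q_i > b_{n,t}^{1/n}$), and so enumerates exactly~$\{x \in \Q \mid x > \egrs\Gamma{S'}\}$.

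\emph{Assembly, and the main point.} Concatenating the two phases yields the machine: if~$S'$ does not represent a generating set of~$\Gamma$, then Phase~1 never halts and nothing is returned; if it does, then Phase~1 halts and Phase~2 returns an enumeration of~$\{x \in \Q \mid x > \egrs\Gamma{S'}\}$. I do not expect a genuine technical obstacle here; the one thing to get right is conceptual. One cannot compute the ball sizes~$\gr\Gamma{S'}n$, nor decide whether~$S'$ generates~$\Gamma$, but the $t$-monotone upper approximations~$b_{n,t}$ to~$\gr\Gamma{S'}n$ turn, after passing to the infimum over~$n$, into a one-sided --- hence recursively enumerable --- description of~$\{x \in \Q \mid x > \egrs\Gamma{S'}\}$, and that is precisely what is wanted.
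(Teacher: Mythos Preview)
Your argument is correct and matches the paper's approach: what you have written is in fact a proof of Proposition~\ref{prop:egr_algorc} (the uniform Turing machine taking~$\genrel SR$ and~$S'$ as input), which is precisely the lemma the paper establishes first, using the same two ingredients --- semi-decidability of the word problem to approximate~$\gr\Gamma{S'}n$ from above, and Fekete's lemma to rewrite~$\{x \in \Q \mid x > \egrs\Gamma{S'}\}$ as~$\{x \in \Q \mid \exists\, n\colon x^n > \gr\Gamma{S'}n\}$. The only thing missing is the short derivation of the two numbered parts of the Corollary from this machine: part~(1) is immediate (apply your machine to the given~$S$; Phase~1 halts since~$S \in \FG(\Gamma)$, and Phase~2 then witnesses right-computability of~$\egrs\Gamma S$), and part~(2) follows by dovetailing your machine over all finite word-sets~$S'$ and outputting~$S'$ whenever its Phase~2 enumeration produces a rational strictly below~$r$ --- exactly as the paper does.
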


Proofs of these observations are provided in Appendix~\ref{appx:rc}. 
In particular, such results could be used to give a crude a priori
upper bound for~$\egrord \Gamma$ by a ``large'' countable ordinal 
for all finitely presented groups~$\Gamma$ with well-ordered
set~$\EGR (\Gamma)$.

\section{Proof technique}\label{sec:proof}

We outline the proofs of Theorem~\ref{thm:wellordered}--\ref{thm:ord}
by \textcite{fs}.  These proofs roughly follow the blueprint of the
case of hyperbolic $3$-manifolds (Section~\ref{subsec:hyp}), where
limit groups will play the role of cusped manifolds:
\begin{itemize}
\item A compactness phenomenon turns convergence of
  exponential growth rates into convergence of actions (of
  subsequences) to the action of a limit group.
\item 
  The main challenge is then to compute the exponential growth rates
  of these limit groups as the limit of the given exponential growth rates.
\end{itemize}
Before going into these arguments, we first recall basic notions
on limit groups.

\subsection{Limit groups}

Limit groups are groups that arise as ``limits'' -- in various senses
-- of groups. These groups are convenient tools in the model theory of
groups and in geometric group theory
\parencite{km_1,km_2,sela_diophantine6,groveswilton}.  In analogy with
$3$-manifolds, limit groups over hyperbolic groups admit a
JSJ-decomposition (\cite{sela_diophantine7};
\cite[Section~4]{weidmannreinfeldt}).  Limit groups over a given
group~$\Gamma$ are the finitely generated subgroups of non-principal
ultraproducts of~$\Gamma$.  More explicitly:

\begin{defi}[limit group]
  Let $\Gamma$ be a group.
  \begin{itemize}
  \item A \emph{stable homomorphism} from a group~$\Lambda$ to~$\Gamma$
    is a sequence~$(f_n \colon \Lambda \to \Gamma)_{n \in \N}$ of homomorphisms
    with the property
    \[ \fa{x \in \Lambda}
    \exi{N \in \N}
    (\fa{n \in \N_{\geq N}} f_n(x) = 1)
    \lor
    (\fa{n \in \N_{\geq N}} f_n(x) \neq 1).
    \]
    The \emph{stable kernel} of a stable homomorphism~$f_* \colon \Lambda \to \Gamma$
    is defined as
    \[ \ker f_* \coloneqq 
    \bigl\{ x \in \Lambda
    \bigm| \exi{N \in \N} \fa{n \in \N_{\geq N}} f_n(x) = 1
    \bigr\}
    \subset \Lambda.
    \]
  \item A \emph{limit group over~$\Gamma$} is a group of the
    form~$\Lambda/\ker f_*$, where $\Lambda$ is a finitely generated
    group and $f_* \colon \Lambda \to \Gamma$ is a stable
    homomorphism. The canonical projection~$f \colon \Lambda \to
    \Lambda/\ker f_*$ is the \emph{limit homomorphism} and we say that
    $f_*$ \emph{converges to~$f$}.  A limit group over~$\Gamma$ is an
    \emph{epi-limit group over~$\Gamma$} if the~$f_n$ can be chosen to
    be epimorphisms.
  \item A \emph{limit group} is a limit group over a finitely
    generated free group.
  \end{itemize}
\end{defi}

\begin{exem}
  If $\Gamma$ is a group, then every finitely generated subgroup of~$\Gamma$ can be
  viewed as a limit group over~$\Gamma$
  (via the inclusion homomorphisms). In particular, $\Gamma$ is
  an epi-limit group over~$\Gamma$ if $\Gamma$ is finitely generated.
\end{exem}

\subsection{Limits and their exponential growth rate}\label{subsec:limegr}

\begin{theo}[compactness; \protect{\cite[proof of Theorem~2.2]{fs}}]\label{thm:compactness}
  Let $\Gamma$ be a hyperbolic group, let $(X,d)$ be a Cayley graph
  of~$\Gamma$, and let $(S_n)_{n \in \N}$ be a sequence of finite
  generating sets of~$\Gamma$ such that the sequence~$(\egrs \Gamma
  {S_n})_{n \in \N}$ is bounded. Then there exists a subsequence
  (again denoted by~$(S_n)_{n \in \N}$) with the following properties:
  \begin{itemize}
  \item All~$S_n$ have the same size. Let~$S$ be a set of this cardinality
    and let $F$ be the free group freely generated by~$S$. 
  \item There exist epimorphisms~$f_n \colon F \to \Gamma$ for
    all~$n \in \N$ such that $f_n(S)$ is conjugate to~$S_n$.
    Moreover, $f_*$ is a stable homomorphism~$F \to \Gamma$.
    Let $L$ denote the associated limit group.
  \item The sequence
    \[ \bigl(F
    \actson_{f_n}
    \bigl(X, \frac1{\max_{s \in S} d(1,f_n(s))} \cdot d\bigr)\bigr)_{n \in \N}
    \]
    of actions (induced by~$f_n \colon F \to \Gamma$ and
    the translation action of~$\Gamma$ on~$X$)
    converges in the $F$-Gromov--Hausdorff distance to
    a faithful action of~$L$ on a real tree.
  \end{itemize}
\end{theo}
\begin{proof}
  The boundedness of the exponential growth rates allows us
  to fix the size of the generating set because the exponential
  growth rate grows at least linearly in the size of
  the generating set by an estimate of 
  \textcite{arzhantsevalysenok}. 

  One can then apply the Bestvina--Paulin method after conjugating and
  rescaling appropriately (\cite[proof of Theorem~2.2]{fs};
  \cite[Section~2]{weidmannreinfeldt}).
\end{proof}

\begin{theo}[limits of exponential growth rates; \protect{\cite[Proposition~2.3]{fs}}]
  \label{thm:limegr}
  Let $\Gamma$ be a hyperbolic group, let $\Lambda$ be a finitely
  generated group with finite generating set~$S$, and let $(f_n \colon
  \Lambda \to \Gamma)_{n \in \N}$ be a stable homomorphism consisting
  of epimorphisms that converges to a limit group~$f \colon \Lambda
  \to L$ over~$\Gamma$ with a faithful action on a real tree.  Then
  $\egrs \Gamma {f_n(S)} \leq \egrs L {f(S)}$ holds for all large enough~$n \in \N$
  and 
  \[ \lim_{n \to \infty} \egrs \Gamma {f_n(S)} = \egrs L {f(S)}
  \]
\end{theo}

\begin{proof}[Sketch of proof]
  By precomposition, 
  without loss of generality we may assume that
  $\Lambda$ is free and that $S$ a free generating set. 

  We first explain why ``$\leq$'' holds (provided the limit exists):
  Because $\Gamma$ is hyperbolic, for all large enough~$n \in \N$,
  there exists a homomorphism~$h_n \colon L \to \Gamma$
  with~$f_n = h_n \circ f$
  \parencite[Lemma~6.5, Corollary~7.13]{weidmannreinfeldt}:
  \[
  \xymatrix{%
  (\Lambda,S)
    \ar[d]_-{f}
    \ar[dr]^-{f_n}
  &
    \\
    (L,f(S))
    \ar@{-->}[r]_-{h_n}
    & (\Gamma,f_n(S))
  }
  \]
  Therefore, monotonicity of the exponential growth rates
  (Remark~\ref{rem:mono}) implies that $\egrs \Gamma {f_n(S)} \leq
  \egrs L {f(S)}$ for all large enough~$n \in \N$.

  The hard work lies in proving convergence and~``$\geq$'': 
  Given~$\varepsilon \in \R_{>0}$, the goal is to show that
  for all large enough~$n \in \N$, we have 
  \[ \log \egrs \Gamma {f_n(S)} 
  \geq \log \egrs L {f(S)} - \varepsilon.
  \]

  Matters would be simple if, given~$N \in \N$, the
  multiplication-projection map
  \begin{align*}
    \Gr L {f(S)} N ^q
    & \to \Gr {\Gamma} {f_n(S)} {q \cdot N}
    \\
    (w_1,\dots,w_q)
    & \mapsto h_n(w_1 \cdot \dots \cdot w_q)
  \end{align*}
  were injective for all large enough~$n \in \N$ and all~$q \in
  \N$. However, this will not happen in general. Using the faithful
  limit action of~$L$ on a real tree, \textcite[proof of
    Proposition~2.3]{fs} find enough freeness inside~$L$ to show
  through delicate estimates that
  there exists a~$b \in \N$, a four-element subset~$U \subset \Gr L
  {f(S)} b$, and a constant~$C \in \R_{>0}$ with:
  \begin{itemize}
  \item[]
    For
    all~$q \in \N$, there is a map~$\varphi_{q} \colon L^q
    \to L$ 
    of the form
    \[ (w_1,\dots, w_q) \mapsto
    w_1 \cdot u_1 \cdot \dots \cdot w_q \cdot u_q,
    \]
    where the ``separators''~$u_1,\dots, u_q \in U$ may depend
    on~$w_1,\dots, w_q$ and satisfy a ``small cancellation condition''
    that ensures the following: Given~$N \in \N$,  
    for all large enough~$n \in \N$ and all~$q \in \N$, the
    map~$h_n \circ \varphi_q \colon L^q \to \Gamma$ is injective on
    at least a subset~$A_{N,n,q}$ of size $(1/ C \cdot \gr L {f(S)} N)^q$
    of~$\Gr L {f(S)} N ^q$. In particular,
    \[
    \gr \Gamma {f_n(S)} {q \cdot (N+b)}
    \geq \# A_{N,n,q} 
    \geq \bigl( \frac 1 C \cdot \gr L {f(S)} N \bigr)^q.
    \]
    More specifically, this works for all~$n \in \N$ that are large
    enough so that $h_n$ is injective on~$\Gr L {f(S)} {2 \cdot N}$;
    such~$n$ exist in view of the convergence of actions.
  \end{itemize}
  Given~$\varepsilon \in \R_{>0}$, we choose~$N \in \N$
  large enough to have
  \[ \frac1 {N+b} \cdot
  (\log \gr L {f(S)} N - \log C)
  \geq \frac 1 N \cdot \log \gr L {f(S)} N - \varepsilon.
  \]
  Then, we obtain for all large enough~$n \in \N$ that
  \begin{align*}
    \log \egrs \Gamma {f_n(S)}
    & = \lim_{q \to \infty} \frac 1 {q \cdot (N+b)}
    \cdot \log \gr \Gamma {f_n(S)} {q \cdot (N+b)}
    \\
    & \geq \frac 1 {N+b} \cdot \log \frac{\gr L {f(S)} N} C
    \\
    & \geq
    \frac 1 N \cdot \log \gr L {f(S)} N - \varepsilon
    \\
    & \geq
    \log \egrs L {f(S)} - \varepsilon,
  \end{align*}
  as desired.
\end{proof}

\begin{rema}\label{rem:limegr_gen}
  The proof of Theorem~\ref{thm:limegr} is mainly based on properties
  of the limit action on the real tree. In fact, the theorem also
  holds under the following weaker assumptions on~$\Gamma$
  \parencite[Proposition~3.2]{f}: The group~$\Gamma$ is finitely
  generated, equationally Noetherian, and admits a non-elementary
  isometric action on a hyperbolic graph~$X$ that satisfies a uniform weak
  proper discontinuity condition \parencite[Definition~2.1]{f} and
  that admits a constant~$N$ such that for every~$S \in
  \FG(\Gamma)$, the set $S^N$ contains an element that acts
  hyperbolically on~$X$.
\end{rema}

\subsection{Well-orderedness}

\begin{proof}[Sketch of proof of Theorem~\ref{thm:wellordered}]
  If the given hyperbolic group~$\Gamma$ is virtually cyclic,
  then $\EGR(\Gamma) = \{1\}$, which clearly is well-ordered.

  In the following, we consider the case when $\Gamma$ is
  non-elementary hyperbolic. We assume for a contradiction
  that there exists a sequence $(S_n)_{n\in \N}$ of
  finite generating sets of~$\Gamma$ such that $(\egrs \Gamma
  {S_n})_{n \in \N}$ is strictly monotonically decreasing.
  In particular, the sequence~$(\egrs \Gamma {S_n})_{n \in \N}$ is
  bounded. In view of the compactness theorem (Theorem~\ref{thm:compactness})
  and the invariance of the exponential growth rates under conjugation,
  we may assume without loss of generality
  that there exists a free group~$F$ with free generating set~$S$
  and epimorphisms~$(f_n \colon F \to \Gamma)_{n\in \N}$ with~$f_n(S) = S_n$
  and such that $f_*$ converges to a limit group~$f \colon F \to L$
  over~$\Gamma$ with a faithful action on a real tree. 

  We therefore obtain from Theorem~\ref{thm:limegr} that
  \begin{align*}
  \lim_{n \to \infty} \egrs \Gamma {S_n}
  & 
  = \lim_{n \to \infty} \egrs \Gamma {f_n(S)}
  \\
  & = \egrs L {f(S)}
  & \text{(Theorem~\ref{thm:limegr})}
  \\
  & \geq \egrs \Gamma {S_N}
  > \egrs \Gamma {S_{N+1}}
  > \dots,
  & \text{(for all~$N\gg 0$; Theorem~\ref{thm:limegr})}
  \end{align*}
  which is impossible. This contradiction shows that no such strictly
  decreasing sequence exists and hence $\EGR(\Gamma)$ is
  well-ordered.
\end{proof}

\subsection{Finite ambiguity}

\begin{proof}[Sketch of proof of Theorem~\ref{thm:fin}]
  Let $r \in \R_{>1}$ and let us assume for a contradiction that
  there exists a sequence~$(S_n)_{n \in \N}$ of finite generating sets 
  that all represent different $\Aut(\Gamma)$-orbits and that
  satisfy~$\egrs \Gamma {S_n} = r$ for all~$n \in \N$. 

  Proceeding as before, by the compactness theorem
  (Theorem~\ref{thm:compactness}), we may assume without loss of
  generality that there exists a free group~$F$ with free generating
  set~$S$ and epimorphism~$(f_n \colon F \to \Gamma)_{n\in \N}$
  with~$f_n(S) = S_n$ and such that $f_*$ converges to a limit
  group~$f \colon F \to L$ over~$\Gamma$ with a faithful action on a
  real tree. Hence, Theorem~\ref{thm:limegr} shows that
  \[ \egrs L {f(S)} = \lim_{n \to \infty} \egrs \Gamma {f_n(S)} = r
  \]
  
  On the other hand, by passing to a subsequence, we may furthermore
  assume that for all~$n \in \N$, there exists a homomorphism~$h_n
  \colon L \to \Gamma$ with~$f_n = h_n \circ f$ (as in the proof of
  Theorem~\ref{thm:limegr}), that at most one of the
  epimorphisms~$h_n$ is an isomorphism (because the~$S_n$ lie in
  different $\Aut(\Gamma)$-orbits), and that the kernels of the~$h_n$
  contain no torsion. Then a careful refinement of the proof of
  Theorem~\ref{thm:limegr} shows that the strict inequality 
  \[ \egrs L {f(S)} > \egrs \Gamma {f_n(S)} = r
  \]
  holds for all~$n \in \N$ \parencite[Proposition~3.2]{fs}.
  This contradicts the previous computation that~$\egrs L {f(S)} = r$.
\end{proof}

\subsection{Growth ordinals}

\begin{proof}[Sketch of proof of Theorem~\ref{thm:ord}]
  It suffices to show that $\egrord \Gamma \geq
  \omega^m$ for every~$m \in \N$. 
  Let $m \in \N$. We consider the sequence
  \[
  L_1 \coloneqq  \Gamma * F_m
  \to
  L_2 \coloneqq  \Gamma * F_{m-1}
  \to
  \dots
  \to
  L_m \coloneqq  \Gamma *\Z
  \to
  L_{m+1} \coloneqq  \Gamma
  \]
  of epimorphisms, where $F_j$ is a free group of rank~$j$
  and where the epimorphisms successively kill free generators
  and keep the $\Gamma$-factor intact. It helps to think of~$j$
  as the number of cusps.

  Let us first focus on a single step: If $\Lambda$ is a non-elementary
  hyperbolic group, then there exists a stable homomorphism~$(f_{n}
  \colon \Lambda * \Z \to \Lambda)_{n \in \N}$ consisting of
  epimorphisms that converges to~$\Lambda * \Z$.  Let $S$ be a finite
  generating set of~$\Lambda$ and let $\widetilde S \subset \Lambda *
  \Z$ be a generating set of~$\Lambda * \Z$, e.g., obtained by adding a
  free generator of~$\Z$. By passing to subsequences of~$f_*$, one
  can achieve the following strict monotonicity:
  \begin{itemize}
  \item The sequence~$(\egrs \Lambda {f_n(\widetilde S)})_{n \in \N}$
    is increasing and converges to~$\egrs {\Gamma *\Z}
    {\widetilde S}$; 
    this uses Theorem~\ref{thm:compactness} and Theorem~\ref{thm:limegr},
    as before.
  \item The values in the sequence are all different; this uses a
    finite ambiguity theorem for finitely generated subgroups of limit
    groups over~$\Lambda$ \parencite[Theorem~5.8]{fs}.
  \end{itemize}
  
  For notational simplicity, we now restrict to the case~$m = 2$.  We
  choose a finite generating set~$S$ of~$\Gamma$ and take the extended
  finite generating set~$\widetilde S$ of~$L_1 = \Gamma * F_2$.
  Applying the single step to~$L_1 \to L_2$ leads to a stable
  homomorphism~$f^1_*$ with strict monotonicity.  Let $f^2_*$ be a
  stable homomorphism for~$L_2 \to L_3$.  For each~$n_1 \in
  \N$, we apply the single step to~$L_2 \to L_3$ and the generating
  set~$f_{n_1}(\widetilde S)$ to select a subsequence of~$f^2_*$ with
  strict monotonicity. By composing with~$f_{n_1}$, we obtain a
  sequence~$f_{n_1,*}$ from~$L_1$ to~$L_3 = \Gamma$ such that $(\egrs
  {\Gamma} {f_{n_1,n}(\widetilde S)})_{n \in \N}$ is strictly
  increasing and converges to~$\egrs {L_2} {f_{n_1}(\widetilde S)}$.
  By varying~$n_1$, we thus see that $\egrord \Gamma \geq \omega^2$.

  For higher values of~$m$, one iterates these considerations
  appropriately.
\end{proof}

To prove~$\egrord \Gamma \leq \omega^\omega$ under
additional hypotheses, \textcite[proof of Theorem~4.2]{fs} construct
proper epimorphism chains of limit groups over~$\Gamma$ from
convergent sequences of convergent sequences of etc\dots\ of exponential
growth rates of~$\Gamma$; the Krull dimension property then gives 
control on the maximal lengths of such chains, whence on the maximal
powers of~$\omega$ that appear below a given threshold.

\section{Applications and extensions}\label{sec:apps}

The well-orderedness of exponential growth rates
(Theorem~\ref{thm:wellordered}) in particular contains the fact that
all non-elementary hyperbolic groups have uniformly exponential
growth.

\subsection{Hyperbolic groups are Hopfian}

A group~$\Gamma$ is \emph{Hopfian} if every self-epimorphism~$\Gamma
\to \Gamma$ is an automorphism. This property has applications in the
context of degrees of self-maps of closed manifolds.  Hyperbolic
groups are known to be Hopfian
\parencite{zela_endo,weidmannreinfeldt}. Using that the exponential
growth rates of hyperbolic groups are well-ordered,
\textcite[Corollary~2.9]{fs} complete an approach to proving that
hyperbolic groups are Hopfian outlined by
\textcite{delaharpe_exp}; this is not an independent alternative proof
because the current proof of Theorem~\ref{thm:wellordered} uses the very
results on limit groups that go into the previous proofs that
hyperbolic groups are Hopfian.

\begin{coro}
  Every hyperbolic group is Hopfian.
\end{coro}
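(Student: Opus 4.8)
The plan is to deduce the Hopf property from well-orderedness of~$\EGR(\Gamma)$ (Theorem~\ref{thm:wellordered}) together with the monotonicity of exponential growth rates under epimorphisms (Remark~\ref{rem:mono}), following the strategy of \textcite{delaharpe_exp}. First I would dispose of the elementary case: a virtually cyclic group is either finite (in which case any self-epimorphism is clearly injective by counting) or virtually~$\Z$, and these are easily seen to be Hopfian directly. So assume $\Gamma$ is non-elementary hyperbolic and let $\varphi \colon \Gamma \to \Gamma$ be a self-epimorphism; the goal is to show $\ker \varphi = 1$.

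Next I would set up the key sequence. Fix a finite generating set~$S$ of~$\Gamma$ and consider the images $S_n \coloneqq \varphi^n(S)$, which are again finite generating sets of~$\Gamma$ since each $\varphi^n$ is surjective. The crucial point is the monotonicity estimate: because $\varphi^n \colon (\Gamma, S) \to (\Gamma, S_n)$ is an epimorphism carrying~$S$ onto~$S_n$, the ball of radius~$k$ in the $S$-metric maps onto the ball of radius~$k$ in the $S_n$-metric, so $\gr \Gamma {S_n} k \leq \gr \Gamma S k$ and hence $\egrs \Gamma {S_n} \leq \egrs \Gamma S$ for all~$n$. Iterating, the sequence $(\egrs \Gamma {S_n})_{n \in \N}$ is non-increasing. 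Since $\EGR(\Gamma)$ is well-ordered by Theorem~\ref{thm:wellordered}, a non-increasing sequence in~$\EGR(\Gamma)$ must eventually be constant; so there is an~$N$ with $\egrs \Gamma {S_n} = \egrs \Gamma {S_N}$ for all $n \geq N$. In particular $\egrs \Gamma {\varphi^N(S)} = \egrs \Gamma {\varphi^{N+1}(S)} = \egrs \Gamma {\varphi(\varphi^N(S))}$.

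The main obstacle is then the converse-to-monotonicity step: one must argue that if a self-epimorphism $\psi \colon \Gamma \to \Gamma$ preserves the exponential growth rate with respect to some generating set~$T$ (here $\psi = \varphi$, $T = \varphi^N(S)$, so that $\egrs \Gamma {\psi(T)} = \egrs \Gamma T$), then $\psi$ is injective. Here I would invoke the strict-inequality refinement already used in the proof of Theorem~\ref{thm:fin}: passing to the limit-group machinery, a self-epimorphism with non-trivial kernel produces, after composing with a convergent stable sequence, a \emph{proper} quotient whose exponential growth rate is strictly smaller, contradicting equality. More elementarily, one can cite that a self-epimorphism with non-trivial kernel $K$ of a non-elementary hyperbolic group yields, via the residual finiteness / co-Hopf-type arguments of \textcite{zela_endo} and \textcite{weidmannreinfeldt}, an iterate $\varphi^m$ whose kernel is strictly larger than $\ker\varphi$, and the resulting strictly descending chain of kernels forces a strict drop somewhere in the growth rates — which is exactly what well-orderedness forbids. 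Either way, the conclusion is $\ker\varphi = 1$, so $\varphi$ is an automorphism and $\Gamma$ is Hopfian.

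Finally I would remark, as the paper does, that this is not a genuinely independent proof: the proof of Theorem~\ref{thm:wellordered} already rests on the limit-group technology (the Bestvina--Paulin method, equational Noetherianity, the structure of limit groups over hyperbolic groups) that underlies the earlier proofs of the Hopf property by \textcite{zela_endo} and \textcite{weidmannreinfeldt}. The value of the argument is conceptual: it realises the approach sketched by \textcite{delaharpe_exp}, deriving a purely algebraic rigidity statement from the order-theoretic structure of an entropy-like invariant.
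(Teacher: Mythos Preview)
Your overall architecture---use well-orderedness of~$\EGR(\Gamma)$ to rule out a non-injective self-epimorphism---matches the paper, but the implementation diverges and has a real gap at the ``main obstacle'' step.

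The paper does not iterate~$\varphi$. Instead it observes that well-orderedness means the infimum~$\egr\Gamma$ is actually \emph{attained} by some~$S \in \FG(\Gamma)$, and then invokes a result of \textcite{arzhantsevalysenok_tight}: for a non-elementary hyperbolic group, any epimorphism~$f\colon \Gamma \to \Gamma$ with non-trivial kernel satisfies the \emph{strict} inequality~$\egrs \Gamma S > \egrs \Gamma{f(S)}$. This immediately contradicts minimality of~$S$. Your stabilising-sequence argument would also work once you have this Arzhantseva--Lys\"enok input (apply it to~$T = \varphi^N(S)$), but it is a detour compared with simply picking a minimiser.

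The gap is precisely that you do not supply the Arzhantseva--Lys\"enok strict inequality and your two proposed replacements do not do the job. Your first suggestion, the limit-group strict inequality from the proof of Theorem~\ref{thm:fin} (\cite[Proposition~3.2]{fs}), concerns a converging stable sequence~$(f_n)$ and compares~$\egrs L{f(S)}$ with~$\egrs \Gamma{f_n(S)}$; it does not directly give~$\egrs \Gamma T > \egrs \Gamma{\varphi(T)}$ for a single self-epimorphism~$\varphi$. Your second suggestion explicitly appeals to \textcite{zela_endo} and \textcite{weidmannreinfeldt}, which already prove that hyperbolic groups are Hopfian; using their structural results to force ``an iterate with strictly larger kernel'' is circular (if kernels were strictly growing you would already contradict Hopfianity, which is what you are trying to prove). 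Replace this paragraph by a citation of Arzhantseva--Lys\"enok and your argument becomes correct---and then you may as well simplify to the minimiser version the paper uses.
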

\begin{proof}
  Elementary hyperbolic groups are Hopfian because
  they are virtually cyclic (whence finitely generated
  and residually finite).
  
  Let $\Gamma$ be a non-elementary hyperbolic group and let $f \colon
  \Gamma \to \Gamma$ be an epimorphism. Because $\EGR(\Gamma)$
  is well-ordered (Theorem~\ref{thm:wellordered}), there exists
  a finite generating set~$S$ of~$\Gamma$ with
  $\egr \Gamma = \egrs \Gamma S.
  $
  \emph{Assume} for a contradiction that the kernel of~$f$
  is non-trivial. Then, \textcite{arzhantsevalysenok_tight}
  show that there is a \emph{strict} monotonicity 
  \[ \egrs \Gamma S > \egrs \Gamma {f(S)}.
  \]
  However, this contradicts the minimality property of~$S$.
  Thus, $f$ is an automorphism.
\end{proof}

All finitely generated residually finite groups are Hopfian. While
fundamental groups of closed hyperbolic manifolds are residually
finite and hyperbolic, it is a long-standing open problem whether all
hyperbolic groups are residually finite.

\subsection{Generalisations}

The methods discussed in Section~\ref{sec:proof} by \textcite{fs}
extend to cover also the following generalisations:
\begin{itemize}
\item If $\Gamma$ is a hyperbolic group, then the set
  \[ \{ \egrs {H} {S}
  \mid H < \Gamma \text{ finitely generated and non-elementary},\ S \in \FG(H)
  \}
  \]
  is well-ordered \parencite[Theorem~5.1]{fs}. This
  can be viewed as an addition to the Tits alternative
  for hyperbolic groups. 
\item Moreover, in this subgroup setting, there is a corresponding
  finite ambiguity statement for non-elementary hyperbolic
  groups \parencite[Theorem~5.3]{fs}.  
\end{itemize}
As a consequence, they also obtain analogous results for limit groups
over non-elementary hyperbolic groups
\parencite[Corollary~5.6--5.10]{fs}.  Furthermore, the approach is
robust enough to admit an extension to the case of sub-semigroups
\parencite[Section~6]{fs}.

\textcite{f} adapted the method to obtain well-orderedness of
exponential growth rates sets for other classes of groups, including
certain groups acting acylindrically on hyperbolic spaces, rank-$1$
lattices, fundamental groups of strictly negatively curved Riemannian
manifolds, and certain relatively hyperbolic groups.  These results
can, for instance, be applied to certain subgroups of right-angled
Artin groups \parencite[Corollary~1.0.11]{kerr}.

\appendix

\section{Terminology}\label{appx:terminology}

For the sake of completeness, we recall the basic terminology
appearing in the main results (Section~\ref{sec:results}).

\subsection{Hyperbolic groups}

Finitely generated groups are hyperbolic if their Cayley graphs
are ``negatively curved'' in the sense that geodesic triangles
in are uniformly slim \parencite{gromov_hypgroups,bh}:

\begin{defi}[hyperbolic group]
  A finitely generated group~$\Gamma$ is \emph{hyperbolic} if the
  Cayley graph of~$\Gamma$ with respect to one (whence every
  \parencite[Theorem~III.H.1.9]{bh}) finite generating set is a
  hyperbolic metric space. A hyperbolic group is \emph{non-elementary}
  if it is not virtually cyclic.
\end{defi}

\begin{exem}[hyperbolic groups]
  Fundamental groups of closed smooth manifolds that admit a
  Riemannian metric of negative sectional curvature are
  hyperbolic in view of the \v Svarc--Milnor lemma and
  the fact that CAT$(-1)$-spaces are hyperbolic metric
  spaces. In particular, this includes
  the fundamental groups of closed hyperbolic manifolds.
  Such fundamental groups are virtually cyclic if and only
  if the dimension is at most~$1$.

  Finitely generated free groups are hyperbolic.  
  The class of hyperbolic groups is closed under quasi-isometries
  \parencite[Theorem~III.H.1.9]{bh} and under certain amalgamations
  \parencite{bestvinafeighn}.
\end{exem}

The group~$\Z^2$ is \emph{not} hyperbolic.  More generally, all
finitely generated groups that contain a subgroup isomorphic to~$\Z^2$
are \emph{not} hyperbolic \parencite[Corollary~III.$\Gamma$.3.10]{bh}. 
In general, subgroups of hyperbolic groups need \emph{not} be
hyperbolic.

\subsection{Exponential growth rates of groups}

The exponential growth rate of groups measures the exponential
expansion rate of the size of balls in Cayley graphs
\parencite[Chapter~VII.B]{delaharpe}:

\begin{rema}\label{rem:fekete}
  Let $\Gamma$ be a finitely generated group and let $S \subset \Gamma$
  be a finite generating set of~$\Gamma$. We write~$\gr \Gamma S n$
  for the number of elements in the $n$-ball~$\Gr \Gamma S n$
  of the Cayley graph of~$\Gamma$ with respect to~$S$. Then
  $\gr \Gamma S {n+m} 
  \leq \gr \Gamma S n \cdot \gr \Gamma S m  
  $ 
  for all~$n, m \in \N$. Therefore, the Fekete lemma 
  shows that the limit of~$(\gr \Gamma S n ^{1/n})_{n \in \N}$ exists
  and that
  \[ \lim_{n \to \infty} \gr \Gamma S n ^{1/n}
  = \inf_{n \in \N_{>0}} \gr \Gamma S n ^{1/n}.
  \]
\end{rema}

\begin{defi}[exponential growth rate]
  Let $\Gamma$ be a finitely generated group. 
  \begin{itemize}
  \item Let $S \subset \Gamma$ be a finite generating
    set of~$\Gamma$. The \emph{exponential growth rate of~$\Gamma$
      with respect to~$S$} is defined as
    \[ \egrs \Gamma S \coloneqq  \lim_{n \to\infty} \gr \Gamma S n ^{1/n}.
    \]
  \item We write
    $\EGR(\Gamma) \coloneqq  \{ \egrs \Gamma S \mid S \in \FG(\Gamma) \}
    $
    for the (countable) set of all exponential growth rates of~$\Gamma$, 
    where $\FG(\Gamma)$ denotes the set of finite generating sets
    of~$\Gamma$.
  \item The \emph{exponential growth rate of~$\Gamma$} is
    the infimum
    \[ \egr \Gamma \coloneqq  \inf \EGR(\Gamma).
    \]
  \item
    The group~$\Gamma$ has \emph{exponential growth} if there exists
    an~$S \in \FG(\Gamma)$ with~$\egrs \Gamma S > 1$.  The
    group~$\Gamma$ has \emph{uniform exponential growth} if~$\egr
    \Gamma > 1$.
   \end{itemize}
\end{defi}

\begin{rema}[monotonicity of exponential growth rates]\label{rem:mono}
  Let $\Gamma$ and $\Lambda$ be finitely generated groups.
  \begin{enumerate}
  \item If $f \colon \Gamma \to \Lambda$ is an epimorphism, and $S \in
    \FG(\Gamma)$, then
    $\egrs \Gamma S \geq \egrs \Lambda {f(S)}.
    $ 
  \item
    If $\Lambda$ is a subgroup of~$\Gamma$ and $\Lambda$ has
    exponential growth, then also $\Gamma$ has exponential growth.
  \end{enumerate}
\end{rema}

\begin{exem}[exponential growth]
  Finitely generated free groups have exponential growth if and only
  if they are of rank at least~$2$. More precisely
  \parencite[Proposition~VII.13]{delaharpe}: If $F$ is a free group
  and $S \in \FG(F)$, then
  \[ \egrs F S \geq 2 \cdot \rk(F) - 1.
  \]

  Because non-elementary hyperbolic groups [uniformly] contain free
  groups of rank~$2$, monotonicity shows that they have [uniform]
  exponential growth \parencite{koubi}.
    
  There exist finitely generated groups that have 
  exponential growth but do \emph{not} have uniform exponential
  growth \parencite{wilson}. In particular, for such groups~$\Gamma$,
  the set~$\EGR(\Gamma)$ is \emph{not} well-ordered.
\end{exem}

Exponential growth rates seem to be fragile under quasi-isometries: It
is an open problem to determine whether uniform exponential growth is
stable under quasi-isometries.

\subsection{Well-ordered countable sets and ordinals}

Well-orderings are orderings that allow for induction
principles. Moreover, well-orderings admit an arithmetic,
the ordinal arithmetic.

\begin{defi}[well-ordered sets, ordinals]
  An ordered set~$(A,<)$ is \emph{well-ordered} if every
  non-empty subset of~$A$ contains a $<$-minimal element.
  An \emph{ordinal} is an isomorphism class of well-ordered
  ordered sets. An ordinal is \emph{countable} if the underlying
  set is countable.
\end{defi}

In the context of Section~\ref{sec:results}, the following ordinals
are important (Figure~\ref{fig:ordinals}):

\def\ordpt#1{%
  \fill #1 circle (0.07cm);}
\def\ordomega#1#2{%
  \begin{scope}[x=#2cm]
    \foreach \j in {1,...,8} {%
      \begin{scope}[shift={(1-1/\j,0)},scale={3/\j^2},every node/.style={scale=1/\j^2}]
        #1
      \end{scope}
    }
  \end{scope}}
\def\ordbox#1{%
  \draw[black!50] (-0.05/2,-0.1) rectangle +(0.18,0.2);
  \draw (0,0) node {$#1$};}

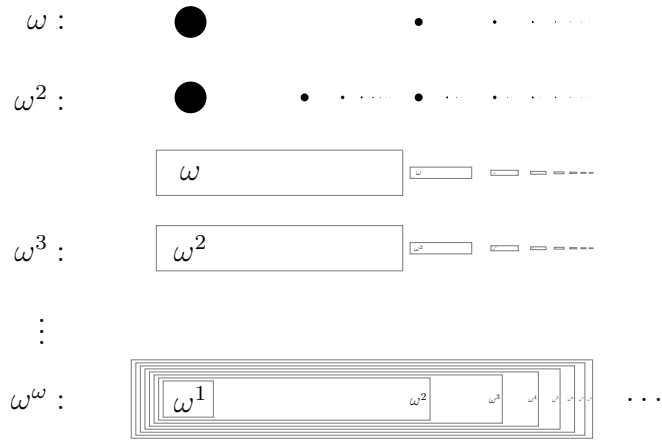
\begin{figure}
  \begin{center}
    \begin{tikzpicture}[x=1cm,y=1cm,thin]
      \begin{scope}[shift={(0,1)}]
        \draw (-1.5,0) node[anchor=east] {$\omega:$};
        \ordomega{\ordpt{(0,0)}}{6}
      \end{scope}
        
      \begin{scope}[shift={(0,0)}]
        \draw (-1.5,0) node[anchor=east] {$\omega^2:$};
        \ordomega{%
                  \begin{scope}[scale={2/6}]
                    \ordomega{\ordpt{(0,0)}}{3}
                  \end{scope}}
                 {6}
      \end{scope}

      \begin{scope}[shift={(0,-1)}]
        \ordomega{\ordbox{\omega}}{6}
      \end{scope}

      \begin{scope}[shift={(0,-2)}]
        \draw (-1.5,0) node[anchor=east] {$\omega^3:$};
        \ordomega{\ordbox{\omega^2}}{6}
      \end{scope}

      \begin{scope}[shift={(0,-3)}]
        \draw (-1.5,0) node[anchor=east] {$\vdots$\phantom{\ :}};
      \end{scope}

      \begin{scope}[shift={(0,-4)}]
        \draw (-1.5,0) node[anchor=east] {$\omega^\omega:$};
        \begin{scope}[x=6cm]
          \foreach \j in {1,...,8} {%
            \begin{scope}[shift={(1-1/\j,0)}]
              \draw[black!50] (1/\j-1-0.05-0.01*\j,-0.2-0.04*\j)
              rectangle +(1-1/\j+0.05+0.01*\j + 0.05/\j,0.4+0.08*\j);
              \begin{scope}[scale={1.5/\j},every node/.style={scale=1/\j}]
                \draw (0,0) node {$\omega^{\j}$};
              \end{scope}
            \end{scope}
          }
          \draw (1,0) node {$\dots$};
          \end{scope}
      \end{scope}

    \end{tikzpicture}
  \end{center}

  \caption{The ordinals~$\omega$, $\omega^2$, \dots, $\omega^\omega$, schematically}
  \label{fig:ordinals}
\end{figure}

\begin{exem}[$\omega^\omega$]
  The natural numbers~$\N$ are well-ordered with respect to the
  standard order. The corresponding ordinal is denoted~$\omega$.
  For~$k \in \N$, we write~$\omega^k$ for the ordinal represented
  by~$\N^k$ with the lexicographic order.  Equipping the finite
  support functions~$\N \to \N$ with the lexicographic order leads to
  a well-ordered set; its ordinal number is denoted
  by~$\omega^\omega$. The ordinal~$\omega^\omega$ can alternatively
  also be described as~$\sup_{k \in \N} \omega^k$.
\end{exem}

\begin{exem}  
  The subsets $A \coloneqq  \{ 1-1/n \mid n \in \N_{>0}\}$ and $B\coloneqq  \bigcup_{n
    \in \N} (n+A)$ of~$\R$ are well-ordered with respect to the
  standard order on~$\R$.  The set~$A$ represents the
  ordinal~$\omega$ and $B$ represents the ordinal~$\omega^2$.
  The subset~$\Q_{\geq 0} \subset \R$ is \emph{not} well-ordered.
\end{exem}

\section{Right-computability of exponential growth rates}\label{appx:rc}

We provide proofs for the right-computability claims in
Section~\ref{subsec:rc}. 

\begin{proof}[Proof of Proposition~\ref{prop:egr_algorc}]
  Let $F(S)$ be the set of reduced words over~$S \sqcup S^{-1}$.
  In particular, $F(S)$ is a free group, freely generated by~$S$,
  with respect to the composition given by concatenation and reduction.
  It is well known that we can Turing-enumerate all finite subsets of~$F(S)$
  that represent generating sets of~$\Gamma$ under the canonical
  projection~$F(S) \to \genrel SR = \Gamma$ (by Turing-enumerating 
  the normal closure of~$R$ in~$F(S)$). 

  Therefore it suffices to show that there exists a Turing machine that
  given a finite generating set~$S'
  \subset F(S)$ of~$\Gamma$ enumerates the set~$A(S') \coloneqq  \{ x \in \Q \mid x > \egrs
  \Gamma {S'} \}$. 
  By the Fekete lemma (Remark~\ref{rem:fekete}), for all generating
  sets~$S'$, we have
  \[
  A(S')
  =
  \bigl\{ x \in \Q
  \bigm| \exi{n\in \N_{>0}} x^n > \gr \Gamma {S'} n
  \bigr\}.
  \]
  The numbers~$\gr \Gamma {S'} n$ are not necessarily computable
  in terms of~$n$ and~$S'$ (as the word problem might not be solvable in~$\Gamma$),
  but recursively enumerating the normal closure of~$R$ in~$F(S)$
  shows that there exists a Turing machine that given a finite generating
  set~$S' \subset F(S)$ of~$\Gamma$ enumerates 
  $\{ (n,m) \mid n,m \in \N,\ m \geq \gr \Gamma {S'} n \};
  $ 
  hence, there is also a Turing machine for~$A(\;\cdot\;)$. 
\end{proof}

\begin{proof}[Proof of Corollary~\ref{cor:egr_basicrc}]
  The first part is a direct consequence of Proposition~\ref{prop:egr_algorc}. 

  For the second part, let $r \in \Q$. For all~$S \in \FG(\Gamma)$, we have
  \[ \egrs \Gamma {S} < r
  \iff
  \exi{x \in \Q} (r > x \land x > \egrs \Gamma S).
  \]
  Let $\genrel SR$ be a finite presentation of~$\Gamma$.
  Using a Turing machine as provided by
  Proposition~\ref{prop:egr_algorc}, we can thus
  construct a Turing machine that enumerates
  all finite sets~$S'$ of words over~$S \sqcup S^{-1}$
  such that $S'$ represents a generating set of~$\Gamma$
  and such that $\egrs \Gamma {S'} < r$.
\end{proof}

For simplicity, we restricted the discussion to finitely presented
groups. Similar arguments also apply to finitely generated recursively
presented groups.
Conversely, one might wonder whether every right-computable real
number~$\geq 1$ can be realised as the exponential growth rate of some
finite generating set of some finitely/recursively presented group.

\subsection*{Acknowledgements}

This work was supported by the CRC~1085 \emph{Higher Invariants}
(Universit\"at Regensburg, funded by the DFG).

\printbibliography

\end{document}
